\documentclass[reqno]{amsart}
\usepackage{hyperref}


\begin{document}
\title[\hfilneg  Caristi Kirk's Theorem in $M-$Metric Type Spaces  \hfil ]
{ Relations Between partial Metric Spaces and M-Metric Spaces,
Caristi Kirk's Theorem in $M-$Metric Type  Spaces }
\author[K. Abodayeh, N. Mlaiki, T. Abdeljawad, W. Shatanawi \hfil  \hfilneg]{K. Abodayeh, N. Mlaiki, T. Abdeljawad, W. Shatanawi}

\address{K. Abodayeh, N. Mlaiki, T. Abdeljawad, W. Shatanawi,  \newline
Department of Mathematics and general courses\\
\noindent Prince Sultan University\\
\noindent Riyadh, Saudi Arabia.}
\email{\href{mailto:kamal@psu.edu.sa}{kamal@psu.edu.sa}}
\email{\href{mailto:nmlaiki@psu.edu.sa}{nmlaiki@psu.edu.sa}}
\email{\href{mailto:tabdeljawad@psu.edu.sa}{tabdeljawad@psu.edu.sa}}
\email{\href{mailto:wshatanawi@psu.edu.sa}{wshatanawi@psu.edu.sa}}

\address{W. Shatanawi,  \newline
Department of Mathematics, Hashemite University\\
Zarqa, Jordan.}
\email{\href{mailto:swasfi@hu.edu.jo}{swasfi@hu.edu.jo}}

\subjclass[2000]{47H10,54H25,46J10, 46J15}
\keywords{Partial metric, $M$ metric space, symmetric convergence,symmetric lower semi-continuous, Caristi-Kirk Theorem}

\thispagestyle{empty}

\numberwithin{equation}{section}
\newtheorem{theorem}{Theorem}
\newtheorem{acknowledgement}[theorem]{Acknowledgement}
\newtheorem{algorithm}[theorem]{Algorithm}
\newtheorem{axiom}[theorem]{Axiom}
\newtheorem{case}[theorem]{Case}
\newtheorem{claim}[theorem]{Claim}
\newtheorem{conclusion}[theorem]{Conclusion}
\newtheorem{condition}[theorem]{Condition}
\newtheorem{conjecture}[theorem]{Conjecture}
\newtheorem{corollary}[theorem]{Corollary}
\newtheorem{criterion}[theorem]{Criterion}
\newtheorem{definition}[theorem]{Definition}
\newtheorem{example}[theorem]{Example}
\newtheorem{exercise}[theorem]{Exercise}
\newtheorem{lemma}[theorem]{Lemma}
\newtheorem{notation}[theorem]{Notation}
\newtheorem{problem}[theorem]{Problem}
\newtheorem{proposition}[theorem]{Proposition}
\newtheorem{remark}[theorem]{Remark}
\newtheorem{solution}[theorem]{Solution}
\newtheorem{summary}[theorem]{Summary}
\newcommand{\IR}{\mbox{I \hspace{-0.2cm}R}}
\newcommand{\IN}{\mbox{I \hspace{-0.2cm}N}}

\begin{abstract}
Very recently, Mehadi et al [M. Asadi, E. Karap{\i}nar, and P.
Salimi, New extension of partial metric spaces with some fixed-point
results on $M-$metric spaces] extended the partial metric spaces to
the notion of $M-$metric spaces. In this article, we study some
relations between partial metric spaces and $M-$metric spaces. Also,
we generalize Caristi Kirki's Theorem from partial metric spaces to
$M-$metric spaces, where we corrected some gaps in the proof of the
main Theorem in E. Karap\i nar [E. Karap\i nar, Generalizations of
Caristi Kirk's Theorem on Partial Metric Spaces, Fixed Point Theory
Appl. 2011: 4, (2011)].
 We close our contribution by introducing some examples to validate and verify our extension results.
\end{abstract}

\maketitle

\section{Introduction and Preliminaries}

Fixed point theory has many applications in applied sciences. So the
attraction of large number of scientists in this subject is
understood. The Banach contraction theorem \cite{BN} is the first
result in fixed point theorems. Many authors extended the notion of
standard metric spaces in many ways. In 1994 Mattews \cite{Mat92}
introduced the notion of partial metric spaces as a generalization
of standard metric spaces in the sense that the distance between
identical points need not be zero. Then after, many authors
formulated and proved many fixed and common fixed point theorems in
partial metric spaces \cite{EK00}-\cite{Shat1}. In 2013, Haghi et al
\cite{H} introduced an outstanding technique to reduce some fixed
and common fixed point theorems from partial metric spaces to
standard metric spaces. In 2014, M. Asadi et al \cite{New M}
formulated the concept of $M-$metric spaces as a generalization of
partial metric spaces, as well as, a generalization of metric
spaces. It is worth mention that the technique of Haghi et al
\cite{H} is not applicable in $M-$metric spaces.

Here, we present the basic definitions and crucial results of
partial metric spaces.

A partial metric is a function $p:X\times X\to [0,\infty)$ satisfying the following conditions
\begin{itemize}
\item[$(P1)$] If $p(x,x)=p(x,y)=p(y,y)$, then $x=y$,
\item[$(P2)$] $p(x,y)=p(y,x)$,
\item[$(P3)$] $p(x,x)\leq p(x,y)$,
\item[$(P4)$] $p(x,z)+p(y,y)\leq p(x,y)+p(y,z)$,
\end{itemize}
for all $x,y,z \in X$. Then $(X,p)$ is called a partial metric space.
 If $p$ is a partial metric $p$ on $X$,  then the function $d_p : X
\times X \rightarrow [0,\infty)$ given by
$$d_p(x,y)=2p(x,y)-p(x,x)-p(y,y)$$
is a metric on $X$. Each partial metric $p$ on
$X$ generates a $T_0$ topology $\tau_p$ on $X$ with a base of the
family of open $p$-balls $\{B_p(x, \varepsilon): x\in X,~ \varepsilon>
0\}$, where
$B_p(x, \varepsilon)=\{y \in X: p(x, y)< p(x, x)+\varepsilon\}$
for all $x \in X$ and $\varepsilon>0$.
Similarly, closed $p$-ball is defined as
$B_p[x, \varepsilon]=\{y \in X: p(x, y)\leq p(x, x)+\varepsilon\}$. For more details see e.g. \cite{IAFTPA2011,Mat92}.

\begin{definition}[See e.g. \cite{Mat92,IAFTPA2011}]
Let $(X,p)$ be a partial metric space.
\begin{enumerate}
\item[$(i)$] A sequence $\{x_n\}$ in $X$ converges to $x\in X$
whenever $\displaystyle\lim_{n\to \infty}p(x,x_n)=p(x,x)$,
\item[$(ii)$] A sequence $\{x_n\}$ in $X$ is called Cauchy whenever $\displaystyle\lim_{n,m\to \infty} p(x_n, x_m)$ exists (and finite),
\item[$(iii)$] $(X,p)$ is said to be complete if every Cauchy sequence
$\{x_n\}$ in $X$ converges, with respect to $\tau_p$, to a point
$x\in X$, that is, $\displaystyle\lim_{n,m_\to \infty} p(x_n, x_m)=p(x, x)$.
\item[$(iv)$] A mapping $f:X\to X$ is said to be continuous at $x_0 \in X$ if for each $\varepsilon> 0$ there
exists $\delta>0$ such that $f(B(x_0,\delta))\subset
B(f(x_0),\varepsilon)$.
\end{enumerate}
\end{definition}

\begin{lemma} \label{before}
[See e.g. \cite{Mat92,IAFTPA2011}]
Let $(X,p)$ be a partial metric space.
\begin{enumerate}
\item[$(a)$] A sequence $\{x_n\}$ is Cauchy if and only if
$\{x_n\}$ is a Cauchy sequence in the metric space $(X,d_p)$,
\item[$(b)$] $(X,p)$ is complete if and only if the metric space $(X,d_p)$ is complete. Moreover,
\begin{equation}
\displaystyle\lim_{n\to \infty}d_p(x,x_n)=0 \Leftrightarrow
\displaystyle\lim_{n\to \infty}p(x,x_n)=\displaystyle\lim_{n,m_\to \infty} p(x_n, x_m)=p(x, x).
\label{elma1}
\end{equation}
\end{enumerate}
\label{eks}
\end{lemma}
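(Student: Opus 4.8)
The plan is to reduce both assertions to two elementary consequences of the axioms $(P1)$--$(P4)$. The first is the rearrangement identity
$$ p(x,y) = \tfrac12\big(d_p(x,y) + p(x,x) + p(y,y)\big), $$
which is just the definition of $d_p$ solved for $p(x,y)$. The second is the estimate
$$ |p(x,x) - p(y,y)| \le d_p(x,y) \qquad \text{for all } x,y \in X, $$
which I would obtain from $(P3)$: assuming $p(x,x)\ge p(y,y)$, the inequality $p(x,x)\le p(x,y)$ gives $p(x,x)-p(y,y) \le 2p(x,y)-p(x,x)-p(y,y) = d_p(x,y)$, and the symmetric case follows by interchanging $x$ and $y$. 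These two facts let one translate freely between the diagonal values $p(x_n,x_n)$, the off-diagonal values $p(x_n,x_m)$, and the metric $d_p$.

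For part (a), suppose first that $\{x_n\}$ is Cauchy in $(X,p)$, so $a := \lim_{n,m} p(x_n,x_m)$ exists and is finite. Putting $m=n$ shows $p(x_n,x_n) \to a$ as well, and then the identity gives $d_p(x_n,x_m) = 2p(x_n,x_m) - p(x_n,x_n) - p(x_m,x_m) \to 2a - a - a = 0$, so $\{x_n\}$ is $d_p$-Cauchy. Conversely, if $d_p(x_n,x_m)\to 0$, the estimate $|p(x_n,x_n)-p(x_m,x_m)| \le d_p(x_n,x_m)$ shows that $\{p(x_n,x_n)\}$ is a Cauchy sequence of reals, hence converges to a finite limit $a$; feeding this back into the identity yields $p(x_n,x_m) \to a$, so $\{x_n\}$ is $p$-Cauchy. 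I expect this converse to be the main obstacle: unlike the forward direction, one must first secure the existence and finiteness of the diagonal limit before the off-diagonal limit can be read off, and this is exactly what the estimate $|p(x,x)-p(y,y)|\le d_p(x,y)$ is for.

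For the displayed equivalence (\ref{elma1}), I would argue analogously. If $d_p(x,x_n)\to 0$, the estimate gives $p(x_n,x_n)\to p(x,x)$, and then the identity gives $p(x,x_n)\to p(x,x)$; moreover the metric triangle inequality $d_p(x_n,x_m)\le d_p(x_n,x)+d_p(x,x_m)\to 0$ together with part (a) yields $\lim_{n,m}p(x_n,x_m) = p(x,x)$. Conversely, if $\lim_n p(x,x_n)=\lim_{n,m}p(x_n,x_m)=p(x,x)$, then taking $m=n$ in the double limit gives $p(x_n,x_n)\to p(x,x)$, and the identity forces $d_p(x,x_n)\to 0$.

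Finally, the completeness equivalence in (b) follows by combining (a) with (\ref{elma1}). If $(X,p)$ is complete and $\{x_n\}$ is $d_p$-Cauchy, then by (a) it is $p$-Cauchy, so it converges in $\tau_p$ to some $x$ with $\lim_{n,m}p(x_n,x_m)=p(x,x)$; the $(\Leftarrow)$ half of (\ref{elma1}) then gives $d_p(x,x_n)\to 0$, so $(X,d_p)$ is complete. The reverse implication is symmetric: a $p$-Cauchy sequence is $d_p$-Cauchy by (a), its $d_p$-limit $x$ satisfies $d_p(x,x_n)\to 0$, and the $(\Rightarrow)$ half of (\ref{elma1}) promotes this to $\lim_n p(x,x_n)=\lim_{n,m}p(x_n,x_m)=p(x,x)$, which is exactly $\tau_p$-convergence in the sense of completeness.
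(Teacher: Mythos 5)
Your proof is correct. The paper itself gives no argument for this lemma --- it is quoted from the cited references \cite{Mat92,IAFTPA2011} --- so there is nothing internal to compare against; your two auxiliary facts, the rearrangement $p(x,y)=\tfrac12\bigl(d_p(x,y)+p(x,x)+p(y,y)\bigr)$ and the estimate $|p(x,x)-p(y,y)|\le d_p(x,y)$ derived from $(P3)$, are exactly the standard route, and you correctly identify that the only nontrivial step is securing the finite diagonal limit $\lim_n p(x_n,x_n)$ in the converse direction before reading off the off-diagonal limit.
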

\begin{lemma} (See e.g. \cite{EK00,EK02})
Let $(X,p)$ be a  partial metric space. Then
\begin{enumerate}
\item[(A)] If $p(x,y)=0$ then $x=y$.
\item[(B)] If $x\neq y$, then  $p(x,y)>0$.
\end{enumerate}
\label{tlemma}
\end{lemma}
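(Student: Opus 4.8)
The plan is to establish part (A) directly from the axioms and then obtain part (B) as its immediate contrapositive. For (A), I would assume $p(x,y)=0$ and aim to verify the hypothesis of the separation axiom $(P1)$, namely that the three quantities $p(x,x)$, $p(x,y)$, $p(y,y)$ all coincide. The key observation is that the partial metric takes values in $[0,\infty)$, so each diagonal term is nonnegative, and this combines with $(P3)$ to pin those terms to $0$.

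Concretely, my first step is to apply $(P3)$ in the form $p(x,x)\le p(x,y)=0$; since $p(x,x)\ge 0$, this forces $p(x,x)=0$. Next I would use the symmetry axiom $(P2)$ to rewrite $p(y,x)=p(x,y)=0$, and then apply $(P3)$ again, now as $p(y,y)\le p(y,x)=0$, to conclude $p(y,y)=0$ by nonnegativity. At this point I have $p(x,x)=p(x,y)=p(y,y)=0$, so the three values agree, and invoking $(P1)$ yields $x=y$, which proves (A).

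For part (B), I would simply contrapose (A): if $x\ne y$, then necessarily $p(x,y)\ne 0$, and since $p(x,y)\ge 0$ always holds, this gives $p(x,y)>0$. I do not anticipate any real obstacle in this argument; the only point requiring a little care is to apply the inequality $(P3)$ together with nonnegativity \emph{twice}, once for $p(x,x)$ and once for $p(y,y)$, before the separation axiom $(P1)$ can be used.
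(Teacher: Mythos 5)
Your proof is correct: the two applications of $(P3)$ (once as $p(x,x)\le p(x,y)$ and once, after using symmetry $(P2)$, as $p(y,y)\le p(y,x)$), combined with nonnegativity, force $p(x,x)=p(x,y)=p(y,y)=0$, so $(P1)$ gives $x=y$, and (B) is the immediate contrapositive. The paper itself states this lemma without proof, citing external references, but your argument is the standard one and matches exactly what is intended.
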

\begin{remark}
If $x=y $, $p(x,y)$ may not be $0$.
\end{remark}
The following two lemmas can be derived from the triangle inequality $(P4)$.
\begin{lemma}  (See e.g. \cite{EK00,EK02})
Let $x_n\rightarrow z$ as $n\rightarrow\infty$ in a partial metric space $(X,p)$ where $p(z,z)=0$.
Then $\displaystyle\lim_{n\rightarrow\infty} p(x_n,y)=p(z,y)$ for every $y \in X$.
\label{sonra}
\end{lemma}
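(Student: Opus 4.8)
The plan is to establish the two-sided estimate
$$\bigl| p(x_n,y) - p(z,y) \bigr| \le p(z,x_n)$$
for every $y \in X$ and then let $n \to \infty$. Since $x_n \to z$ in $(X,p)$ means precisely that $\lim_{n\to\infty} p(z,x_n) = p(z,z) = 0$, the right-hand side tends to $0$, and the conclusion $\lim_{n\to\infty} p(x_n,y) = p(z,y)$ follows immediately. The whole argument therefore reduces to producing matching upper and lower bounds on $p(x_n,y) - p(z,y)$.

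First I would apply the triangle inequality $(P4)$ with the substitution $x := x_n$, $y := z$, $z := y$, which reads
$$p(x_n,y) + p(z,z) \le p(x_n,z) + p(z,y).$$
Because $p(z,z) = 0$ by hypothesis, this collapses to the upper bound $p(x_n,y) - p(z,y) \le p(x_n,z)$, and by symmetry $(P2)$ we may write the right-hand side as $p(z,x_n)$. Next I would apply $(P4)$ again in the mirror-image fashion, substituting $x := z$, $y := x_n$, $z := y$, to obtain
$$p(z,y) + p(x_n,x_n) \le p(z,x_n) + p(x_n,y).$$
Discarding the nonnegative term $p(x_n,x_n)$ yields the lower bound $p(z,y) - p(x_n,y) \le p(z,x_n)$. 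Combining the two bounds gives the desired estimate.

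There is no genuine obstacle here; the entire proof rests on two correct instantiations of $(P4)$ together with the definition of convergence in a partial metric space. The only two points demanding care are (i) invoking the hypothesis $p(z,z) = 0$ to eliminate that term in the first estimate — this is exactly where the assumption $p(z,z)=0$ is used — and (ii) remembering that $p(x_n,x_n) \ge 0$, so that this term may simply be dropped in the second estimate. Once both directions are in hand, the squeeze by $p(z,x_n) \to 0$ finishes the argument.
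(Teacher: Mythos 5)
Your proof is correct and follows exactly the route the paper indicates: the paper gives no explicit proof, remarking only that the lemma ``can be derived from the triangle inequality $(P4)$,'' and your two instantiations of $(P4)$ yielding $|p(x_n,y)-p(z,y)|\le p(z,x_n)$ together with $\lim_{n\to\infty}p(z,x_n)=p(z,z)=0$ are precisely that derivation. Both the use of $p(z,z)=0$ in the upper bound and the discarding of $p(x_n,x_n)\ge 0$ in the lower bound are handled correctly.
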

\begin{lemma}  (See e.g. \cite{SES})
Let  $\lim_{n \rightarrow \infty} p(x_n,y)=p(y,y)$ and  $\lim_{n \rightarrow \infty} p(x_n,z)=p(z,z)$.
If $p(y,y)=p(z,z)$ then $y=z.$
\label{ses}
\end{lemma}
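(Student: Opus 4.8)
The plan is to prove the single equality $p(y,z)=p(y,y)$; since the hypothesis already supplies $p(y,y)=p(z,z)$, this would force $p(y,y)=p(y,z)=p(z,z)$, and axiom $(P1)$ then yields $y=z$ at once. One of the two inequalities is immediate: by $(P3)$ we have $p(y,y)\le p(y,z)$, so the whole matter reduces to establishing the reverse estimate $p(y,z)\le p(y,y)$.

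For the upper bound I would feed the sequence $\{x_n\}$ into the triangle inequality $(P4)$, using $x_n$ as the middle point. Specializing $(P4)$ with the substitution $x\mapsto y$, $y\mapsto x_n$, $z\mapsto z$ gives
$$p(y,z)+p(x_n,x_n)\le p(y,x_n)+p(x_n,z).$$
Letting $n\to\infty$ and invoking the two convergence hypotheses $p(x_n,y)\to p(y,y)$ and $p(x_n,z)\to p(z,z)=p(y,y)$, the right-hand side tends to $2p(y,y)$, so that
$$p(y,z)\le 2p(y,y)-\liminf_{n\to\infty}p(x_n,x_n).$$
Consequently the desired bound $p(y,z)\le p(y,y)$ reduces to showing $\liminf_{n\to\infty}p(x_n,x_n)\ge p(y,y)$, the opposite inequality $\limsup_{n\to\infty}p(x_n,x_n)\le p(y,y)$ being automatic from $(P3)$ via $p(x_n,x_n)\le p(x_n,y)$.

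The step I expect to be the crux is precisely the control of the self-distances $p(x_n,x_n)$: one must argue that $\lim_{n\to\infty}p(x_n,x_n)=p(y,y)$. This is delicate, because bare convergence $p(x_n,y)\to p(y,y)$ in the topology $\tau_p$ does not by itself pin down the limit of $p(x_n,x_n)$; some strengthening (for instance $d_p$-convergence, for which $\lim_{n\to\infty}p(x_n,x_n)=p(y,y)$ holds by Lemma~\ref{eks}) is what would make the estimate close. In the special case $p(y,y)=p(z,z)=0$ the obstacle disappears cleanly: then $p(x_n,y)\to 0=p(y,y)$ says $x_n\to y$ with $p(y,y)=0$, so Lemma~\ref{sonra} gives $\lim_{n\to\infty}p(x_n,z)=p(y,z)$; comparing with the hypothesis $\lim_{n\to\infty}p(x_n,z)=p(z,z)=0$ forces $p(y,z)=0$, and Lemma~\ref{tlemma}(A) then delivers $y=z$. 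I would therefore organize the write-up around first securing the limiting value of $p(x_n,x_n)$, and only afterwards run the $(P4)$ estimate above to conclude $p(y,z)=p(y,y)=p(z,z)$ and invoke $(P1)$.
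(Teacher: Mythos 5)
The paper offers no proof of Lemma~\ref{ses} at all --- it is simply quoted from \cite{SES} --- so there is nothing to compare your argument against line by line; it has to be judged on its own. On those terms it is not a complete proof, and you say so yourself: after the correct reduction to $p(y,z)=p(y,y)$ via $(P1)$, and the correct $(P4)$ estimate $p(y,z)\le 2p(y,y)-\liminf_{n}p(x_n,x_n)$, everything hinges on $\liminf_{n}p(x_n,x_n)\ge p(y,y)$, and you rightly observe that $(P3)$ only gives the opposite inequality $\limsup_n p(x_n,x_n)\le p(y,y)$. This gap cannot be closed from the stated hypotheses: the lemma as literally written is false. Take $X=\{y,z\}\cup\{x_n:n\in\mathbb{N}\}$ with $p(y,y)=p(z,z)=1$, $p(y,z)=\tfrac32$, $p(x_n,x_n)=0$, $p(x_n,y)=p(x_n,z)=1+\tfrac1n$ and $p(x_n,x_m)=\left|\tfrac1n-\tfrac1m\right|$. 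A direct check confirms $(P1)$--$(P4)$, the hypotheses $\lim_n p(x_n,y)=p(y,y)=1=p(z,z)=\lim_n p(x_n,z)$ hold, and yet $y\ne z$; here $\liminf_n p(x_n,x_n)=0<1=p(y,y)$, which is exactly the quantity your argument needs and cannot get.

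So the missing ingredient is not a cleverer estimate but a stronger hypothesis. The version of this lemma that is actually true (and the one used elsewhere in the literature, and implicitly in the paper's own Theorem~\ref{A}, where the relevant convergence is the symmetric one of Lemma~\ref{compllemma}) assumes convergence in the sense of Lemma~\ref{eks}(b), i.e. $\lim_{n,m\to\infty}p(x_n,x_m)=p(y,y)$, which in particular forces $\lim_n p(x_n,x_n)=p(y,y)$; with that in hand your $(P4)$ computation immediately yields $p(y,z)\le p(y,y)$, and $(P3)$ plus $(P1)$ finish. Your separate treatment of the case $p(y,y)=p(z,z)=0$ via Lemma~\ref{sonra} and Lemma~\ref{tlemma}(A) is also correct. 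In short: your diagnosis of the crux is exactly right, but the write-up cannot be completed as a proof of the statement as printed; it can only be completed as a proof of the corrected statement with the strengthened convergence hypothesis.
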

\begin{remark}
Limit of a sequence $\{x_n\}$ in a partial metric space $(X,p)$ is not unique.
\end{remark}
\begin{example}
Consider $X=[0,\infty)$ with $p(x,y)=\max \{x,y\}$. Then
$(X,p)$ is a  partial metric space. Clearly, $p$ is not a metric.
Observe that  the sequence $\{1+\frac{1}{n^2}\}$ converges both for example  to $x=2$  and  $y=3$, so no uniqueness of the limit.
\end{example}

Here, we present the definition of $M-$metric spaces.

\noindent{\bf Notation.}
Let $m:X\times X\to [0,\infty)$ be a function. The following notations are useful for our investigation.
\begin{enumerate}
\item $m_{x,y}=\min \{ m(x,x),m(y,y) \} .$
\item $M_{x,y}=\max \{ m(x,x),m(y,y) \} .$
\end{enumerate}

\begin{definition} \cite{New M}
Let $X$ be a nonempty set. A function $m : X \times X \rightarrow [0,\infty)$ is called an {\it M-metric}
if the following conditions are satisfied:
\begin{enumerate}
\item $m(x, x) = m(y, y) = m(x, y)\Leftrightarrow x = y,$
\item $m_{x,y} \leq m(x, y),$
\item $m(x, y) = m(y, x),$
\item $(m(x, y) - m_{x,y}) \leq (m(x, z) - m_{x,z})+(m(z, y) -m_{z,y}).$
\end{enumerate}
Then the pair $(X, m)$ is called an M-metric space.
\end{definition}
It is straightforward to verify that every partial metric space is
an $M-$metric space but the converse is not true.
\begin{example}
Let $X=\{1,2,3 \}$. Define $m$ on $X\times X$ as follows: \\
$m(1,1)=1$, $m(2,2)=4$, $m(3,3)=5$, $m(1,2)=m(2,1)=10$, $m(1,3)=m(3,1)=7, m(2,3)=m(3,2)=6$. \\
It is easy to verify that $(X,m)$ is an M-metric space but it is not a partial metric space; to
see this, $m(1,2)\not\leq m(1,3)+m(3,2)-m(3,3)$.
\end{example}

For more about convergence and completeness in $M-$metric spaces, we refer to \cite{New M}.

\section{Convergence in Partial Metric Spaces and M-Metric Spaces}

In this section, we give some new concepts about partial metric
space topologies and the relation to the topology of the $M$-metric
space studied in \cite{New M}.
\begin{definition} (symmetric convergence)
Let $\{x_n\}$ be a sequence in a partial metric space $(X,p)$ with $B_p(x,\epsilon)=\{y \in X: p(x,y)< \epsilon+p(x,x)\}$. We say that the sequence
$\{x_n\}$ converges symmetrically to $x \in X$  (shortly $x_n \rightarrow ^s x$), if for every $\epsilon >0$ there exists $n_0 \in \mathbb{N}$ such that
\begin{equation}\label{con}
  x_n \in B_p(x,\epsilon)~~\texttt{and}~~x \in B_p(x_n,\epsilon)~~\texttt{for all}~~ n\geq n_0.
\end{equation}
Equivalently, if $\lim_{n\rightarrow \infty} p(x_n,x)=lim_{n\rightarrow \infty} p(x_n,x_n)=p(x,x)$.
\end{definition}
Clearly, symmetric convergence in partial metric spaces implies convergence and both coincide in metric spaces and in $M-$metric spaces. Symmetric convergence in partial metric spaces implies being Cauchy and the symmetric limit is unique. Namely, we have

\begin{lemma} \label{compllemma} (see Lemma 1 in  \cite{compl})
Let $(X,p)$ be a partial metric space and assume $\{x_n\}$ in $X$.
\begin{itemize}
  \item a) If $\{x_n\}$ is symmetrically convergent in $X$, then it is Cauchy.
  \item b)If $\{x_n\}$ converges symmetrically in $X$ to both $x$ and $y$, then $x=y$.
  \item c) If $\{x_n\}$ and $\{y_n\}$ converge symmetrically to $x$ and $y$,respectively. Then, $\lim_{n\rightarrow\infty}p(x_n,y_n)=p(x,y)$ .
\end{itemize}
\end{lemma}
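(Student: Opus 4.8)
The plan is to route every claim through the associated metric $d_p(x,y)=2p(x,y)-p(x,x)-p(y,y)$, so that each part reduces to a standard fact about the metric space $(X,d_p)$. The pivotal observation, which I would record first, is that symmetric convergence in $(X,p)$ forces ordinary convergence in $(X,d_p)$. Indeed, if $x_n\rightarrow^{s} x$, then by definition $p(x_n,x)\to p(x,x)$ and $p(x_n,x_n)\to p(x,x)$, so substituting into the defining formula gives
\begin{equation*}
d_p(x_n,x)=2p(x_n,x)-p(x_n,x_n)-p(x,x)\longrightarrow 2p(x,x)-p(x,x)-p(x,x)=0.
\end{equation*}
Hence $d_p(x_n,x)\to 0$. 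With this bridge established, the three parts fall out quickly.

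For part (a), if $\{x_n\}$ converges symmetrically to some $x$, then $d_p(x_n,x)\to 0$, so $\{x_n\}$ is a convergent — hence Cauchy — sequence in the metric space $(X,d_p)$. By Lemma \ref{eks}(a), $\{x_n\}$ is then Cauchy in the partial metric space $(X,p)$. For part (b), suppose $x_n\rightarrow^{s} x$ and $x_n\rightarrow^{s} y$. The bridge gives $d_p(x_n,x)\to 0$ and $d_p(x_n,y)\to 0$; since $d_p$ is a genuine metric its limits are unique, whence $d_p(x,y)=0$ and therefore $x=y$. (One could equally argue directly: symmetric convergence to both points yields $p(x,x)=\lim p(x_n,x_n)=p(y,y)$, together with $\lim p(x_n,x)=p(x,x)$ and $\lim p(x_n,y)=p(y,y)$, so Lemma \ref{ses} applies to give $x=y$.)

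For part (c), the bridge supplies $d_p(x_n,x)\to 0$ and $d_p(y_n,y)\to 0$, and the two-sided triangle inequality $|d_p(x_n,y_n)-d_p(x,y)|\le d_p(x_n,x)+d_p(y_n,y)$ then yields $d_p(x_n,y_n)\to d_p(x,y)$. I would now unpack this back into $p$ by writing
\begin{equation*}
2p(x_n,y_n)=d_p(x_n,y_n)+p(x_n,x_n)+p(y_n,y_n),
\end{equation*}
and invoking $p(x_n,x_n)\to p(x,x)$ and $p(y_n,y_n)\to p(y,y)$, which are part of the hypotheses $x_n\rightarrow^{s} x$ and $y_n\rightarrow^{s} y$. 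The right-hand side then tends to $d_p(x,y)+p(x,x)+p(y,y)=2p(x,y)$, so $p(x_n,y_n)\to p(x,y)$, as required.

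The only step demanding genuine care is (c): the conversion between $d_p$ and $p$ must keep explicit track of the diagonal terms $p(x_n,x_n)$ and $p(y_n,y_n)$, whose convergence is precisely the extra information that distinguishes symmetric convergence from ordinary partial-metric convergence. Everything else is a faithful transcription of standard metric-space facts across the bridge set up at the outset.
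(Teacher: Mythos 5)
The paper does not prove this lemma at all: it is imported verbatim as Lemma~1 of the cited reference \cite{compl}, so there is no in-paper argument to compare against. Your proof is correct and self-contained. The bridge $d_p(x_n,x)=2p(x_n,x)-p(x_n,x_n)-p(x,x)\to 0$ is exactly the right reduction (it is the content of the equivalence (\ref{elma1}) in Lemma~\ref{before}(b)), part (a) then follows from Lemma~\ref{eks}(a), part (b) from uniqueness of limits in the metric $d_p$ (or, as you note, from Lemma~\ref{ses}), and in part (c) you correctly keep track of the diagonal terms $p(x_n,x_n)\to p(x,x)$ and $p(y_n,y_n)\to p(y,y)$, which is precisely where the symmetric (as opposed to merely $\tau_p$-) convergence hypothesis is needed. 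No gaps.
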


\textbf{Notation}: For the convergence in a partial metric space (with respect to $\tau_p$) we denote $x_n\rightarrow^u x$ or $x_n\rightarrow x$ and for symmetric convergence we denote $x_n\rightarrow^s x$.

\indent
\begin{definition}
Let $f:(X,p)\rightarrow (Y,\rho)$ be a function between two partial metric spaces. Then,

\begin{itemize}
  \item a) $f$ is said to be $uu-$continuous at $a \in X$, if for every open ball $B_\rho(f(a),\epsilon)$ of $f(a)$ in $Y$, there exists an open ball $B_p(a,\delta)$ of $a$ in $X$ such that $f(B_p(a,\delta))\subset B_\rho(f(a),\epsilon)$. $f$ is called $uu-$continuous on $X$ if it is $uu-$continuous at each $a\in X$.
  \item b)$f$ is said to be $su-$continuous at $a \in X$, if for every open ball $B_\rho(f(a),\epsilon)$ of $f(a)$ in $Y$, there exists an open ball $B_p(a,\delta)$ of $a$ in $X$ such that $f(x) \in B_\rho(f(a),\epsilon)$ for any $x \in B_p(a,\delta) $ with $a \in B(x,\delta)$. $f$ is called $su-$continuous on $X$ if it is $su-$continuous at each $a\in X$.
  \item c)$f$ is said to be $us-$continuous at $a \in X$, if for every open ball $B_\rho(f(a),\epsilon)$ of $f(a)$ in $Y$, there exists an open ball $B_p(a,\delta)$ of $a$ in $X$ such that $f(x) \in B_\rho(f(a),\epsilon)$ and $f(a) \in B_\rho(f(x),\epsilon)$ for any $x \in B_p(a,\delta) $ . $f$ is called $us-$continuous on $X$ if it is $us-$continuous at each $a\in X$.
  \item d) )$f$ is said to be $ss-$continuous at $a \in X$, if for every open ball $B(f(a),\epsilon)$ of $f(a)$ in $Y$, there exists an open ball $B_p(a,\delta)$ of $a$ in $X$ such that $f(x) \in B_\rho(f(a),\epsilon)$ and $f(a) \in B_\rho(f(x),\epsilon)$ for any $x \in B_p(a,\delta) $ with $x \in B_p(a,\delta) $ . $f$ is called $ss-$continuous on $X$ if it is $ss-$continuous at each $a\in X$.
\end{itemize}

\end{definition}
The following theorem characterizes the above four continuity types by means of sequential ones via usual convergence and symmetric convergence. The proof is direct and classical.
\begin{theorem}
Let $f:(X,p)\rightarrow (Y,\rho)$ be a function between two partial metric spaces and $a \in X$. Then,

\begin{itemize}
  \item a) $f$ is $uu-$continuous at $a$ if and only if $f(x_n)\rightarrow^u f(a)$ for any sequence $x_n \in X$ with $x_n\rightarrow ^u a$.
  \item b) $f$ is $su-$continuous at $a$ if and only if $f(x_n)\rightarrow^s f(a)$ for any sequence $x_n \in X$ with $x_n\rightarrow ^u a$.
  \item c)  $f$ is $us-$continuous at $a$ if and only if $f(x_n)\rightarrow^u f(a)$ for any sequence $x_n \in X$ with $x_n\rightarrow ^s a$.
  \item d)$f$ is $ss-$continuous at $a$ if and only if $f(x_n)\rightarrow^s f(a)$ for any sequence $x_n \in X$ with $x_n\rightarrow ^s a$.
\end{itemize}

\end{theorem}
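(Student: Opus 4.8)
The plan is to treat all four equivalences by a single uniform scheme, since each is a biconditional between a ball (i.e. $\epsilon$--$\delta$) form of continuity and its sequential counterpart, the only difference being whether the condition imposed at the \emph{source} (on $x$ relative to $a$) and at the \emph{target} (on $f(x)$ relative to $f(a)$) is the one-sided membership $x\in B_p(a,\delta)$, corresponding to type $u$, or the two-sided membership $x\in B_p(a,\delta)$ together with $a\in B_p(x,\delta)$, corresponding to type $s$. First I would record the two ball-characterizations of convergence used throughout: $x_n\rightarrow^u a$ is equivalent to the property that for every $\delta>0$ one has $x_n\in B_p(a,\delta)$ for all large $n$, and $x_n\rightarrow^s a$ is equivalent to the property that for every $\delta>0$ one has both $x_n\in B_p(a,\delta)$ and $a\in B_p(x_n,\delta)$ for all large $n$; the analogous statements hold in $(Y,\rho)$ for the images. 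These are immediate from the definitions and from the equivalent formulation $\lim_n p(x_n,a)=\lim_n p(x_n,x_n)=p(a,a)$. The one genuine bookkeeping constraint is to align, in each item, the convergence mode of $x_n$ with the source ball condition and the convergence mode of $f(x_n)$ with the target ball condition.

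For the necessity direction (continuity $\Rightarrow$ sequential continuity) in each case, I would argue directly. Fix $\epsilon>0$ and let $\delta>0$ be the radius furnished by the assumed continuity. If $x_n$ converges to $a$ in the mode demanded by that case, then by the ball-characterization the relevant source condition at radius $\delta$ holds for all large $n$; the definition of that continuity type then yields the corresponding target condition at radius $\epsilon$ for all large $n$, which is exactly the statement that $f(x_n)$ converges to $f(a)$ in the required mode. There is nothing to do here beyond checking that the source condition delivered by the hypothesis on $x_n$ is the one appearing in the relevant definition, and that the target condition produced by the definition is what the desired convergence of $f(x_n)$ asks for.

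For the sufficiency direction I would use the contrapositive and build a bad sequence. Suppose $f$ fails to be continuous of the given type at $a$; then there is $\epsilon_0>0$ such that for every $\delta>0$ some point satisfies the source condition at radius $\delta$ while its image fails the target condition at radius $\epsilon_0$. Taking $\delta=1/n$ produces a sequence $x_n$ for which the source condition holds with shrinking radius, while $f(x_n)$ fails the target condition at the fixed level $\epsilon_0$. The shrinking source condition forces $x_n\rightarrow a$ in the appropriate mode, whereas the persistent failure of the target condition prevents $f(x_n)\rightarrow f(a)$ in the appropriate mode, contradicting the assumed sequential continuity.

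The only step that is more than bookkeeping, and hence the main obstacle, is verifying that the shrinking source condition really delivers the correct \emph{symmetric} convergence in the cases where $s$ appears at the source. For the one-sided membership $x_n\in B_p(a,1/n)$ one gets $p(a,a)\le p(a,x_n)<p(a,a)+1/n$ from $(P3)$, so $p(a,x_n)\to p(a,a)$ and $x_n\rightarrow^u a$ with no further work. For the symmetric case one has in addition $a\in B_p(x_n,1/n)$, i.e. $p(x_n,a)<p(x_n,x_n)+1/n$; combining this with $p(x_n,x_n)\le p(x_n,a)$ from $(P3)$ and $p(x_n,a)=p(a,x_n)$ from $(P2)$ gives $0\le p(x_n,a)-p(x_n,x_n)<1/n$, whence $p(x_n,x_n)\to p(a,a)$ as well, which is precisely the extra coordinate needed for $x_n\rightarrow^s a$ through the formulation $\lim_n p(x_n,a)=\lim_n p(x_n,x_n)=p(a,a)$. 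The same elementary estimate, applied in $(Y,\rho)$, is what lets one read the failure of the two-sided target condition at level $\epsilon_0$ as a genuine failure of symmetric convergence of $f(x_n)$. With these self-distance estimates in hand, all four equivalences follow from the uniform scheme above.
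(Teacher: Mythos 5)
Your general scheme is the right one, and since the paper dismisses this theorem with ``The proof is direct and classical'' and supplies no argument, there is nothing of the paper's to compare it against. Parts a) and d), where source and target modes agree, go through exactly as you describe, and your self-distance estimates ($p(a,a)\le p(a,x_n)$ and $p(x_n,x_n)\le p(x_n,a)$ from $(P3)$) are the correct way to convert the shrinking ball conditions into $\rightarrow^u$ and $\rightarrow^s$ convergence.

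The gap is precisely in the ``one genuine bookkeeping constraint'' that you state but never carry out: for parts b) and c) the alignment you require does not hold for the statement as written. In the paper's definitions, $su$-continuity imposes the \emph{symmetric} condition at the source ($x\in B_p(a,\delta)$ and $a\in B_p(x,\delta)$) and only the one-sided condition at the target, whereas part b) of the theorem pairs it with $x_n\rightarrow^u a$ at the source and $f(x_n)\rightarrow^s f(a)$ at the target; part c) is crossed in the same way. Running your own scheme faithfully proves $su$-continuity $\Leftrightarrow$ ($x_n\rightarrow^s a \Rightarrow f(x_n)\rightarrow^u f(a)$) and $us$-continuity $\Leftrightarrow$ ($x_n\rightarrow^u a \Rightarrow f(x_n)\rightarrow^s f(a)$), which is also the pairing forced by the paper's subsequent remark that $us$-continuity implies every other type and every type implies $su$-continuity. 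The literal statements b) and c) are in fact false: on $X=Y=[0,\infty)$ with $p=\rho=\max$, the identity map is $su$-continuous (take $\delta=\epsilon$), yet $x_n=1+\frac{1}{n}\rightarrow^u 2$ while $p(x_n,x_n)\rightarrow 1\neq p(2,2)$, so $f(x_n)$ does not converge symmetrically to $f(2)$; conversely the identity satisfies ($x_n\rightarrow^s a\Rightarrow f(x_n)\rightarrow^u f(a)$) trivially but is not $us$-continuous at $a=1$, since $0\in B_p(1,\delta)$ for every $\delta>0$ while $1\notin B_\rho(0,\epsilon)$ for $\epsilon\le 1$. So either b) and c) must be restated with the modes uncrossed, or your proof of those two parts cannot be completed as proposed; at minimum you need to perform the alignment check you deferred and record the discrepancy.
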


\begin{remark}
Since symmetric convergence implies usual convergence ($\tau_p-convergence$ ) in partial metric spaces, then it is easy to see the following implications:

\begin{itemize}
  \item a) $us-$implies $ss-$continuity.
  \item b) $ss-$implies $su-$continuity.
  \item c) $us-$implies $su-$continuity.
  \item d) $us-$implies $uu-$continuity.
  \item e) $uu-$implies $su-$continuity.
\end{itemize}
Also, since symmetric and $\tau_p-$convergence (usual convergence) coincide in metric spaces, then for example if $(Y,\rho)$ is metric space then $ss-$continuity and $su-$continuity are the same and $us-$ccontinuity and $uu-$continuity are the same. In this case $uu-$continuity will imply $ss-$continuity. For example, this is the case when the space $(Y,\rho)$ is the set of all nonnegative real  numbers $\mathbb{R}^+=[0,\infty)$ with absolute value metric topology. In this case we will have only two types of continuity the symmetric continuity and usual continuity. It is clear that symmetric continuity is weaker than continuity. For that reason, we next define symmetric (or weak)lower semi- continuous and lower semi-continuous real valued functions  on partial metric spaces. The symmetric (or weak) lower semi-continuity will be used generalize Caristi-Kirk Theorem in partial metric spaces.
\end{remark}

\begin{definition}
Assume $\phi:(X,p)\rightarrow [0,\infty)$ is a mapping of a partial metric space. Then, we say that $\phi(x)$ is lower semi-continuous  (symmetric (or weak) lower semi-continuous) at $x \in X$ if $\phi(x)\leq \liminf_n \phi(x_n)$ for any $x_n \in X$ with $x_n\rightarrow  x $ ($x_n\rightarrow ^s x$).
\end{definition}
Clearly, as mentioned above, that if $\phi(x)$ is lower semi-continuous then it is symmetric lower semi-continuous.

\begin{remark}
\begin{itemize}
  \item In \cite{New M}, it was mentioned that every partial metric space is $M-$ metric space but the partial metric topology used is not the same as the $M-$ metric topology. Hence, it is worth to notice that the $M-$ metric topology is the topology of symmetric convergence on the partial metric space $(X,p)$. Namely, $B_M(x,\epsilon)=\{y \in X: m(x,y)< m_{x,y}+\epsilon\}=\{y \in X: m(x,y)< m(x,x)+\epsilon ~\texttt{and}~m(x,y)< m(y,y)+\epsilon ~\}= \{y \in X: y \in B_p(x,\epsilon) ~\texttt{and}~ x \in B_p(y,\epsilon)~\}$.
  \item In Theorem 2.1 of \cite{New M}, the authors claimed that the topology of the $M-$metric space,$\tau_m$, is not Hausdorff. This is not accurate, since every metric space is $M-$mertic space (the proof has the gap that it is not in general possible to find the claimed $z$). It was to said that $M-$metric spaces are not necessary Hausdorff and a counter example was to be given. Also, the limit may not be unique (see Lemma 2.4 in \cite{New M}).
\end{itemize}

\end{remark}
\section{Generalizations to the Caristi Kirk's Theorem}
We first repair the gap in the proof of Theorem 5 in \cite{EK02}. Then, we remark a generalization by means of symmetric lower-semicontinuity.

\begin{theorem} (See Theorem 5 in \cite{EK02}) \label{A}
Let $(X,p)$ be a complete partial metric space and $\phi:X\rightarrow \mathbb{R}^+$ be lower semi-continuous. Assume $T:X\rightarrow X$ is a self mapping of $X$ satisfying the condition:

\begin{equation}\label{B}
  p(x,Tx)\leq \phi(x)-\phi(Tx),~~~~ \texttt{for all}~ x \in X.
  \end{equation}
  Assume also,
  \begin{equation}\label{Bb}
    X_0=\{x \in X: p(x,x)=0\}\neq \emptyset.
  \end{equation}

Then, $T$ has a fixed point.
\end{theorem}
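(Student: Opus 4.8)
The plan is to run the Picard iteration and then locate a fixed point as a \emph{minimal} element of the Caristi order, paying attention to the two partial–metric subtleties that presumably caused the original gap: that limits must be taken in the sense of symmetric convergence, and that the relevant order is reflexive only on $X_0$.

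First I would fix $x_0\in X_0$ (possible by \eqref{Bb}) and set $x_{n+1}=Tx_n$. Condition \eqref{B} gives $\phi(x_{n+1})\le\phi(x_n)$, so $\{\phi(x_n)\}$ decreases to some $L\ge 0$. Telescoping \eqref{B} through the triangle inequality $(P4)$ yields $p(x_n,x_m)\le\sum_{k=n}^{m-1}p(x_k,x_{k+1})\le\phi(x_n)-\phi(x_m)$ for $m>n$, whence $\lim_{n,m}p(x_n,x_m)=0$ and $\{x_n\}$ is Cauchy. By completeness (Lemma \ref{eks}) the sequence converges symmetrically to some $x^\ast$ with $p(x^\ast,x^\ast)=0$, i.e. $x^\ast\in X_0$; Lemma \ref{sonra} then lets me pass to the limit in the first variable, $\lim_n p(x_n,y)=p(x^\ast,y)$ for every $y$, and lower semicontinuity gives $\phi(x^\ast)\le L$. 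Letting $m\to\infty$ in the telescoped bound and using $\phi(x^\ast)\le L$ produces the key inequality $p(x_n,x^\ast)\le\phi(x_n)-\phi(x^\ast)$ for all $n$.

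The step I expect to be the main obstacle is that the orbit limit $x^\ast$ need \emph{not} itself be fixed; as in the classical Caristi theorem, a fixed point appears only as a minimal element. I would therefore introduce $y\preceq x\iff p(x,y)\le\phi(x)-\phi(y)$ and check that, restricted to $X_0$, this is a genuine partial order: reflexivity is exactly $p(x,x)=0$, antisymmetry follows from $p(x,y)=0\Rightarrow x=y$ (Lemma \ref{tlemma}), and transitivity from $(P4)$. Repeating the computation above shows that any $\preceq$-decreasing sequence in $X_0$ is Cauchy and symmetric-converges to a lower bound lying in $X_0$ (here again symmetric convergence and lower semicontinuity are essential); hence every chain has a lower bound and Zorn's Lemma supplies a minimal $w\in X_0$. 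The delicate point is precisely this chain–lower–bound verification, since a chain need not be countable and one must extract a $\phi$-minimizing subsequence and show, via Lemma \ref{sonra}, that its symmetric limit dominates the whole chain.

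Finally I would show the minimal $w$ is fixed. Condition \eqref{B} gives $Tw\preceq w$. Running the Picard orbit from $w$, its symmetric limit $v\in X_0$ satisfies $v\preceq w$, so minimality forces $v=w$; and the lower–bound inequality for that orbit reads $p(Tw,v)\le\phi(Tw)-\phi(v)$, i.e. $p(Tw,w)\le\phi(Tw)-\phi(w)$. Adding this to the inequality $p(w,Tw)\le\phi(w)-\phi(Tw)$ from \eqref{B} yields $2\,p(w,Tw)\le 0$, so $p(w,Tw)=0$ and $Tw=w$ by Lemma \ref{tlemma}. One can also avoid Zorn entirely by choosing each $x_{n+1}\in X_0$, $x_{n+1}\preceq x_n$, with $\phi(x_{n+1})<2^{-n}+\inf\{\phi(y):y\in X_0,\ y\preceq x_n\}$; the symmetric limit of this sequence is minimal by the same $\phi$-comparison, and the preceding argument then applies verbatim.
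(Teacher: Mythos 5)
Your proof is correct, but it follows a genuinely different route from the paper's. The paper never iterates $T$: it works directly with the sets $S(x)=\{z: p(x,z)\leq p(x,x)+\phi(x)-\phi(z)\}$ and $\alpha(x)=\inf\{\phi(z):z\in S(x)\}$, builds one ``almost minimizing'' sequence $x_{n+1}\in S(x_n)$ with $\phi(x_{n+1})\leq\alpha(x_n)+\tfrac1n$, shows it is Cauchy in $d_p$ and hence symmetrically convergent to some $z$ with $\phi(z)=L=\lim\alpha(x_n)$, and then lets $T$ enter only at the last step: condition \eqref{B} forces $Tz\in S(x_n)$ for all $n$, so $\phi(Tz)\geq L=\phi(z)$, and then $p(z,Tz)\leq\phi(z)-\phi(Tz)\leq 0$. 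Your argument instead runs the Picard orbit, sets up the Caristi order $y\preceq x\iff p(x,y)\leq\phi(x)-\phi(y)$ on $X_0$, and invokes Zorn's Lemma; the fixed point is obtained as a minimal element, with the orbit-limit trick neatly circumventing the fact that $Tw$ need not lie in $X_0$ (where alone the order is reflexive). What the paper's construction buys is the avoidance of Zorn and of the uncountable-chain verification you correctly flag as delicate; what your version buys is a transparent order-theoretic picture and the observation that condition \eqref{B} makes $p(x^\ast,x^\ast)=0$ automatic for orbit limits. Your closing Zorn-free variant, choosing $\phi(x_{n+1})<2^{-n}+\inf\{\phi(y):y\preceq x_n\}$, is essentially the paper's $S(x)$/$\alpha(x)$ construction in disguise. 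Both proofs hinge on the same repair of the original gap: the constructed sequence converges \emph{symmetrically}, so that Lemma \ref{sonra} (equivalently Lemma \ref{compllemma}(c)) legitimately yields $\lim_n p(x_n,y)=p(z,y)$ when passing to the limit in the membership $z\in S(x_n)$.
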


\begin{proof}
For a fixed $x \in X$, define
\begin{equation}\label{B1}
  S(x)=\{z \in X: p(x,z)\leq p(x,x)+\phi(x)-\phi(z)\}~~\texttt{and}~~\alpha(x)=\inf\{\phi(z):z \in S(x)\}.
\end{equation}
Clearly, $x \in S(x)$ and hence $S(x)\neq \emptyset$. Also, it is clear that $0\leq \alpha (x)\leq \phi(x)$.
We take $x \in X$ and construct a sequence $\{x_n\}$ as follows:
$$x_1=x,~ x_{n+1} \in S(x_n)~\texttt{such that}~ \phi(x_{n+1})\leq \alpha(x_n)+\frac{1}{n}, ~\texttt{for}~n=2,3,...$$
Then, one can easily observe that for each $n \in \mathbb{N}$ we have
\begin{equation}\label{B2}
  0\leq p(x_n,x_{n+1}) -p(x_n,x_n)\leq \phi(x_n)-\phi(x_{n+1})
\end{equation}
and

\begin{equation}\label{B3}
  \alpha(x_n)\leq \phi(x_{n+1})\leq  \alpha(x_n) +\frac{1}{n}
\end{equation}
Notice that  the sequence $\{\phi(x_n)\}$ is a decreasing sequence of real numbers which is bounded below by zero. Hence, it will converge to a positive real number, say $L$, By means of (\ref{B3}) we see that
\begin{equation}\label{B4}
  L=\inf_{n}\phi(x_n)=\lim_{n\rightarrow \infty}\phi(x_n)=\lim \alpha(x_n).
\end{equation}
From (\ref{B3}) and (\ref{B4}), for each $k \in \mathbb{N}$, there exists $N_k\in \mathbb{N}$ such that
\begin{equation}\label{B5}
  \phi(x_n)\leq L+ \frac{1}{k},~\texttt{for all}~ n\geq N_k.
\end{equation}
By monotonicity of $\phi(x_n)$, for all $m\geq n\geq N_k$, we have
\begin{equation}\label{B6}
  L \leq \phi(x_m)\leq \phi(x_n)\leq L+ \frac{1}{k}.
\end{equation}
Hence, by noticing that $L-\phi(x_n) <0$, we reach at
\begin{equation}\label{B7}
  \phi(x_n)-\phi(x_m)<\frac{1}{k},~~\texttt{for all}~~m\geq n\geq N_k
\end{equation}
On the other hand, the triangle inequality together with (\ref{B2}) imply that

\begin{eqnarray} \ref{B8}
p(x_n,x_{n+2}) &\leq&  p(x_n,x_{n+1}) + p(x_{n+1},x_{n+2})- p(x_{n+1},x_{n+1}) \\  \nonumber
   &\leq& \phi(x_n)-\phi(x_{n+1})+p(x_n,x_n)+ \phi(x_{n+1}) \\   \nonumber
   &-&\phi(x_{n+2})+p(x_{n+1},x_{n+1})-p(x_{n+1},x_{n+1})\\ \nonumber
   &=& \phi(x_n)-\phi(x_{n+2})+p(x_n,x_n) \\
\end{eqnarray}
If we proceed inductively, we obtain that

\begin{equation}\label{B9}
   p(x_n,x_{m})\leq \phi(x_n)-\phi(x_{m})+p(x_n,x_n),~~\texttt{for all} m\geq n.
\end{equation}
\end{proof}
By (\ref{B7}),then it follows that
\begin{equation}\label{B10}
  p(x_n,x_{m})\leq \phi(x_n)-\phi(x_{m})+p(x_n,x_n)< \frac{1}{k}+ p(x_n,x_n),~~\texttt{for all} m \geq n \geq N_k.
\end{equation}
That is
\begin{equation}\label{B11}
 0\leq p(x_n,x_{m})- p(x_n,x_n)<\frac{1}{k},~~\texttt{for all} m \geq n \geq N_k.
\end{equation}
Letting $k$ or $m,n$ to tend to infinity in (\ref{B11}) we conclude that $\lim_{m,n\rightarrow \infty} [ p(x_n,x_{m})- p(x_n,x_n)]=0$.
Hence, $\lim_{m,n\rightarrow \infty}d_p(x_n,x_m)=\lim_{m,n\rightarrow \infty}[p(x_n,x_{m})-p(x_n,x_n)+p(x_n,x_{m})-p(x_m,x_m)]=0$. Then, completeness of the space $(X,p)$ implies by means of Lemma \ref{before} (b) that $(X,d_p)$ is complete and there exists $z \in X$ such that
\begin{equation}\label{B12}
  \lim_{n\rightarrow \infty}p(x_n,x)=\lim_{n\rightarrow \infty}p(x_n,x_n)=p(z,z).
\end{equation}
That is the sequence $\{x_n\}$ is symmetrically convergent to $z$. Then, by lower semi-continuity of $\phi(x)$,(\ref{B10}) and Lemma \ref{compllemma} (c), we have
\begin{eqnarray} \label{13}
  \phi(z) &\leq&  \liminf_m \phi(x_m)\\  \nonumber
   &\leq& \liminf_m[\phi(x_m)-p(x_n,x_m)+ p(x_n,x_n)] \\  \nonumber
  &=& \phi(x_n)-p(x_n,z)+ p(x_n,x_n)
\end{eqnarray}
or
\begin{equation}\label{B14}
  p(x_n,z)\leq \phi(x_n)-\phi(z)+ p(x_n,x_n)
\end{equation}
That is $z \in S(x_n)$ for all $n \in \mathbb{N}$ and thus $\alpha(x_n)\leq \phi(z)$. Taking (\ref{B4}) into account, we obtain $L\leq \phi (z)$. Moreover, by lower semi-continuity of $\phi(x)$ we know that $ \phi (z)\leq L$. Hence,  $ \phi (z)= L$.
Now, by the triangle inequality, the assumption (\ref{B}) which implies that $Tz \in S(z)$, and that $z \in S(x_n)$ for all $n \in \mathbb{N}$, we obtain

\begin{eqnarray}\label{B14}
  p(x_n,Tz) &\leq& p(x_n,z)+p(z,Tz)-p(z,z) \\ \nonumber
   &\leq& p(x_n,z)+p(z,Tz) \\
   &\leq& \phi(x_n)- \phi(z)+p(x_n,x_n)+\phi(z) -\phi(Tz)\\ \nonumber
   &=& \phi(x_n) -\phi(Tz)+p(x_n,x_n). \nonumber
\end{eqnarray}
Hence, $Tz \in S(x_n)$ for all $n \in \mathbb{N}$, which yields that $\alpha(x_n)\leq \phi(Tz)$ for all $n \in \mathbb{N}$
From (\ref{B4}) we have $\phi(Tz)\geq L$ is obtained and (\ref{B}) with $x=z$ implies  $\phi(Tz)\leq \phi (z)$. All together with that $\phi(z)=L$, we come to the conclusion that $\phi(Tz)\leq \phi (z)$ and hence from the assumption (\ref{B}),we have $p(Tz,z)=0$ and hence $Tz=z$.

\begin{remark}
\begin{itemize}
\item From the proof of Theorem \ref{A}, we noticed that lower semi-continuity of the function $\phi(x)$ can be replaced by weak (symmetric) lower semi-continuity. Indeed, in the proof, it has been shown that the sequence $\{x_n\}$ converges to $z$ symmetrically.
\item From the assumption (\ref{B}), it is clear that the fixed point $z$ in the proof Theorem \ref{A} must satisfy $p(z,z)=0$.
\item From the proof of Theorem \ref{A}, we noticed that the assumption (\ref{B}) can be replaced by a weaker one:
   \begin{equation}\label{wB}
                   p(x,Tx)\leq p(x,x)+\phi(x)-\phi(Tx),~~~~ \texttt{for all}~ x \in X.
   \end{equation}
                For example, the proof step (\ref{B14}) becomes

                \begin{eqnarray}
                 p(x_n,Tz) &\leq& p(x_n,z)+p(z,Tz)-p(z,z)\\ \nonumber
                   &\leq& \phi(x_n)- \phi(z)+p(x_n,x_n)+\phi(z) -\phi(Tz)+p(z,z)-p(z,z) \\ \nonumber
                   &=& \phi(x_n) -\phi(Tz)+p(x_n,x_n).
                \end{eqnarray}

              In this case we don't need the assumption (\ref{Bb}). Hence, it will be  more logical to consider the assumption type (\ref{wB}) as we shall see in the  next theorem
                when we generalize to $M-$metric spaces.
\end{itemize}

\end{remark}

Now, we generalize Theorem \ref{A} to $M-$metric spaces.

\begin{theorem}  \label{AA}
Let $(X,m)$ be a complete $M-$ metric space and $\phi:X\rightarrow \mathbb{R}^+$ be lower semi-continuous. Assume $T:X\rightarrow X$ is a self mapping of $X$ satisfying the condition:

\begin{equation}\label{BB}
  m(x,Tx)\leq m_{x,Tx}+ \phi(x)-\phi(Tx),~~~~ \texttt{for all}~ x \in X.
\end{equation}
Then, $T$ has a fixed point.
\end{theorem}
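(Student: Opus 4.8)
The plan is to transplant the argument of Theorem \ref{A} into the $M$-metric setting, using the quantity $D(x,y) := m(x,y) - m_{x,y}$ in place of $p(x,y) - p(x,x)$. The key structural observation is that axioms (2) and (4) of an $M$-metric say precisely that $D(x,y) \geq 0$ and $D(x,y) \leq D(x,z) + D(z,y)$; that is, $D$ is a symmetric function obeying the triangle inequality, so it behaves like a pseudometric. First I would set $S(x) = \{z \in X : D(x,z) \leq \phi(x) - \phi(z)\}$ and $\alpha(x) = \inf\{\phi(z): z \in S(x)\}$, observe that $x \in S(x)$ (since $D(x,x) = 0$) and that condition (\ref{BB}) is exactly the statement $Tx \in S(x)$. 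Then I would build the sequence $x_1 = x$, $x_{n+1} \in S(x_n)$ with $\phi(x_{n+1}) \leq \alpha(x_n) + \frac{1}{n}$, exactly as before, obtaining $0 \leq D(x_n, x_{n+1}) \leq \phi(x_n) - \phi(x_{n+1})$ and the fact that $\phi(x_n)$ decreases to some $L = \lim_n \alpha(x_n)$.

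Next, using the triangle inequality for $D$ and telescoping, I would get the clean estimate $D(x_n, x_m) \leq \phi(x_n) - \phi(x_m)$ for all $m \geq n$, hence $0 \leq m(x_n,x_m) - m_{x_n,x_m} < \frac{1}{k}$ for $m \geq n \geq N_k$ and therefore $\lim_{n,m} (m(x_n,x_m) - m_{x_n,x_m}) = 0$. Assuming I can pass to a limit $z$ (see below), I would verify via the $D$-triangle inequality that $\lim_m D(x_n,x_m) = D(x_n,z)$, then run $\phi(x_m) \leq \phi(x_n) - D(x_n,x_m)$ through $\liminf_m$, using lower semicontinuity $\phi(z) \leq \liminf_m \phi(x_m)$, to conclude $z \in S(x_n)$ for every $n$. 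This gives $\alpha(x_n) \leq \phi(z)$, hence $L \leq \phi(z)$, while lower semicontinuity also yields $\phi(z) \leq L$, so $\phi(z) = L$. Then $D(x_n, Tz) \leq D(x_n,z) + D(z,Tz) \leq \phi(x_n) - \phi(Tz)$ shows $Tz \in S(x_n)$, whence $L \leq \phi(Tz) \leq \phi(z) = L$, so $\phi(Tz) = \phi(z)$ and $D(z,Tz) \leq \phi(z) - \phi(Tz) = 0$. Thus $m(z,Tz) = m_{z,Tz} = \min\{m(z,z),m(Tz,Tz)\}$.

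The main obstacle lies in two places, both traceable to the fact that the $M$-metric axioms --- unlike axiom $(P3)$ of a partial metric --- give no control on how the diagonal values $m(x,x)$ vary from point to point. First, to invoke completeness in the sense of \cite{New M} I must show $\{x_n\}$ is $M$-Cauchy, which also requires $M_{x_n,x_m} - m_{x_n,x_m} = |m(x_n,x_n) - m(x_m,x_m)| \to 0$, i.e.\ that $\{m(x_n,x_n)\}$ is Cauchy in $\mathbb{R}$; the estimate $D(x_n,x_{n+1}) \leq \phi(x_n) - \phi(x_{n+1})$ pins $m(x_n,x_{n+1})$ only to $\min\{m(x_n,x_n),m(x_{n+1},x_{n+1})\}$ and says nothing about the gap between the two diagonals, so this does not follow as it did in the partial metric case (where $(P3)$ forced $p(x_{n+1},x_{n+1}) + \phi(x_{n+1})$ to be decreasing). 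Second, even granting a symmetric limit $z$, upgrading $m(z,Tz) = \min\{m(z,z),m(Tz,Tz)\}$ to the equality $m(z,z) = m(Tz,Tz) = m(z,Tz)$ required by $M$-metric axiom (1) for $Tz = z$ again needs the two diagonals to agree. My plan for both is to force convergence of the diagonals --- either by sharpening the selection in the construction of $\{x_n\}$ so that $\{m(x_n,x_n)\}$ is monotone, or by extracting a limit-diagonal identity $m(x_n,x_n) \to m(z,z)$ from $M$-completeness and then showing $x_n \to Tz$ as well, which breaks the apparent circularity --- and I expect this diagonal control to be the crux of the whole argument.
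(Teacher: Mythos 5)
Your outline follows the paper's proof of Theorem \ref{AA} essentially step for step: you set $S(x)=\{z\in X: m(x,z)-m_{x,z}\le \phi(x)-\phi(z)\}$ and $\alpha(x)=\inf\{\phi(z):z\in S(x)\}$, build the minimizing sequence with $\phi(x_{n+1})\le\alpha(x_n)+\frac{1}{n}$, telescope the triangle inequality for $D(x,y)=m(x,y)-m_{x,y}$ to get $0\le m(x_n,x_m)-m_{x_n,x_m}<\frac{1}{k}$ for $m\ge n\ge N_k$, pass to a limit $z$, show $z\in S(x_n)$ and then $Tz\in S(x_n)$, and squeeze $\phi(Tz)=\phi(z)=L$. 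Up to that point the two arguments coincide.

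The difference is that you stop at the two places where control of the diagonal values $m(x,x)$ is required and only announce a plan, so as written the proposal is not a complete proof. Concretely: (i) Cauchyness in the sense of \cite{New M} requires, in addition to $\lim_{n,m}\bigl(m(x_n,x_m)-m_{x_n,x_m}\bigr)=0$, control of $M_{x_n,x_m}-m_{x_n,x_m}=|m(x_n,x_n)-m(x_m,x_m)|$, and the telescoped estimate says nothing about the sequence $\{m(x_n,x_n)\}$; (ii) the final inequality gives only $m(z,Tz)=m_{z,Tz}=\min\{m(z,z),m(Tz,Tz)\}$, whereas axiom (1) of an $M$-metric demands the three-way equality $m(z,z)=m(Tz,Tz)=m(z,Tz)$ before one may conclude $Tz=z$. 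You are right that these are the crux, and you should be aware that the paper's own proof does not close them either: after obtaining $\lim_{m,n}[m(x_n,x_m)-m_{x_n,x_m}]=0$ it simply declares the sequence Cauchy and invokes completeness, and at the end it asserts that ``$p(Tz,z)=0$ and hence $Tz=z$'' (note the stray $p$), which does not follow from $m(z,Tz)-m_{z,Tz}\le 0$ alone. So your sketch faithfully reproduces the published argument and honestly flags its weak points, but a complete proof would have to supply the diagonal control you describe, and neither your proposal nor the paper actually does so.
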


\begin{proof}
For a fixed $x \in X$, define
\begin{equation}\label{BB1}
  S(x)=\{z \in X: m(x,z)\leq m_{x,z}+\phi(x)-\phi(z)\}~~\texttt{and}~~\alpha(x)=\inf\{\phi(z):z \in S(x)\}.
\end{equation}
Clearly, $x \in S(x)$ and hence $S(x)\neq \emptyset$. Also, it is clear that $0\leq \alpha (x)\leq \phi(x)$.
We take $x \in X$ and construct a sequence $\{x_n\}$ as follows:
$$x_1=x,~ x_{n+1} \in S(x_n)~\texttt{such that}~ \phi(x_{n+1})\leq \alpha(x_n)+\frac{1}{n}, ~\texttt{for}~n=2,3,...$$
Then, one can easily observe that for each $n \in \mathbb{N}$ we have
\begin{equation}\label{BB2}
  0\leq m(x_n,x_{n+1}) -m_{x_n,x_{n+1}}\leq \phi(x_n)-\phi(x_{n+1})
\end{equation}
and

\begin{equation}\label{BB3}
  \alpha(x_n)\leq \phi(x_{n+1})\leq  \alpha(x_n) +\frac{1}{n}
\end{equation}
Notice that that the sequence $\{\phi(x_n)\}$ is a decreasing sequence of real numbers which is bounded below by zero. Hence, it will converge to a positive real number, say $L$, By means of (\ref{BB3}) we see that
\begin{equation}\label{BB4}
  L=\inf_{n}\phi(x_n)=\lim_{n\rightarrow \infty}\phi(x_n)=\lim \alpha(x_n).
\end{equation}
From (\ref{BB3}) and (\ref{BB4}), for each $k \in \mathbb{N}$, there exists $N_k\in \mathbb{N}$ such that
\begin{equation}\label{BB5}
  \phi(x_n)\leq L+ \frac{1}{k},~\texttt{for all}~ n\geq N_k.
\end{equation}
By monotonicity of $\phi(x_n)$, for all $m\geq n\geq N_k$,we have
\begin{equation}\label{BB6}
  L \leq \phi(x_m)\leq \phi(x_n)\leq L+ \frac{1}{k}.
\end{equation}
Hence, by noticing that $L-\phi(x_n) <0$, we reach at
\begin{equation}\label{BB7}
  \phi(x_n)-\phi(x_m)<\frac{1}{k},~~\texttt{for all}~~m\geq n\geq N_k
\end{equation}
On the other hand, the triangle inequality together with (\ref{BB2}) imply that

\begin{eqnarray} \ref{BB8}\nonumber
m(x_n,x_{n+2})-m_{x_n,x_{n+2}} &\leq&  m(x_n,x_{n+1})-m_{x_n,x_{n+1}} + m(x_{n+1},x_{n+2})- m_{x_{n+1},x_{n+2}} \\  \nonumber
   &\leq& \phi(x_n)-\phi(x_{n+1})+ \phi(x_{n+1})-\phi(x_{n+2}) \\   \nonumber
   &=& \phi(x_n)-\phi(x_{n+2}) \\
\end{eqnarray}
If we proceed inductively, we obtain that

\begin{equation}\label{BB9}
   m(x_n,x_{m})-m_{x_n,x_{m}}\leq \phi(x_n)-\phi(x_{m}),~~\texttt{for all} m\geq n.
\end{equation}

By (\ref{BB7}),then it follows that
\begin{equation}\label{BB10}
  m(x_n,x_{m})\leq \phi(x_n)-\phi(x_{m})+m_{x_n,x_m}< \frac{1}{k}+m_{x_n,x_m},~~\texttt{for all} m \geq n \geq N_k.
\end{equation}
That is
\begin{equation}\label{BB11}
 0\leq m(x_n,x_{m})- m_{x_n,x_m}<\frac{1}{k},~~\texttt{for all} m \geq n \geq N_k.
\end{equation}
Letting $k$ or $m,n$ to tend to infinity in (\ref{BB11}) we conclude that $\lim_{m,n\rightarrow \infty} [ m(x_n,x_{m})- m_{x_n,x_m}]=0$ and thus $\{x_n\}$ is Cauchy. Since $(X,m)$ is complete, there exists $z \in X$ such that
\begin{equation}\label{BB12}
  \lim_{n\rightarrow \infty}[m(x_n,x)-m_{x_n,x}]=0.
\end{equation}
 Then, by lower semi-continuity of $\phi(x)$,(\ref{BB10}) and Lemma 2.2 in  \cite{New M}, we have
\begin{eqnarray} \label{13}
  \phi(z) &\leq&  \liminf_m \phi(x_m)\\  \nonumber
   &\leq& \liminf_m[\phi(x_m)-m(x_n,x_m)+ m_{x_n,x_m}] \\  \nonumber
  &=& \phi(x_n)-m(x_n,z)+ m_{x_n,z}
\end{eqnarray}
or
\begin{equation}\label{BB14}
  m(x_n,z)\leq \phi(x_n)-\phi(z)+ m_{x_n,z}
\end{equation}
That is $z \in S(x_n)$ for all $n \in \mathbb{N}$ and thus $\alpha(x_n)\leq \phi(z)$. Taking (\ref{BB4}) into account, we obtain $L\leq \phi (z)$. Moreover, by lower semi-continuity of $\phi(x)$ we know that $ \phi (z)\leq L$. Hence,  $ \phi (z)= L$.
Now, by the triangle inequality, the assumption (\ref{BB}) which implies that $Tz \in S(z)$, and that $z \in S(x_n)$ for all $n \in \mathbb{N}$, we obtain

\begin{eqnarray}\label{BB15}
  m(x_n,Tz)-m_{x_n,Tz} &\leq& m(x_n,z)-m_{x_n,z}+m(z,Tz)- m_{z,Tz}\\ \nonumber
    &\leq& \phi(x_n)- \phi(z)+\phi(z) -\phi(Tz)=\phi(x_n) -\phi(Tz). \nonumber
\end{eqnarray}
Hence, $Tz \in S(x_n)$ for all $n \in \mathbb{N}$, which yields that $\alpha(x_n)\leq \phi(Tz)$ for all $n \in \mathbb{N}$
From (\ref{BB4}) we have $\phi(Tz)\geq L$ is obtained and (\ref{BB}) with $x=z$ implies  $\phi(Tz)\leq \phi (z)$. All together with that $\phi(z)=L$, we come to the conclusion that $\phi(Tz)\leq \phi (z)$ and hence from the assumption (\ref{BB}),we have $p(Tz,z)=0$ and hence $Tz=z$.
\end{proof}

\begin{remark}\label{AAstrong}
Note that if in Theorem \ref{AA}, we replace the assumption (\ref{BB}) by
\begin{equation}\label{BBstrong}
  m(x,Tx)\leq  \phi(x)-\phi(Tx),~~~~ \texttt{for all}~ x \in X,
\end{equation}
and assume that $X_0=\{x \in X:m(x,x)=0\}\neq\emptyset$, then the fixed point $z$ must satisfy $m(z,z)=0$.
\end{remark}
The next Theorem corrects the gap in Theorem 6 in \cite{EK02}.
\begin{theorem} \label{dd}
Let $\phi : X\rightarrow [0,\infty )$ be a weak lower semicontinuous on a complete partial metric space $(X,p)$. Then there exists $z\in X$ such that
\begin{center}
$\phi (z)< \phi (x)+p(z,x)-p(z,z)$ for all $x\in X$ with $x\neq z$.
\end{center}
\end{theorem}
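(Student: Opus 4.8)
The plan is to read this statement as the assertion that the \emph{Br\o ndsted partial order} attached to $\phi$ possesses a minimal element, and to produce such an element by a Zorn-type argument driven by symmetric convergence. Concretely, retain from the proof of Theorem \ref{A} the set $S(x)=\{y\in X: p(x,y)\le p(x,x)+\phi(x)-\phi(y)\}$ and define $y\preceq x \iff y\in S(x)$. First I would check that $\preceq$ is genuinely a partial order: reflexivity is $x\in S(x)$; transitivity comes from $(P4)$ exactly as in the verification that $w\in S(x)$ forces $S(w)\subseteq S(x)$; and antisymmetry is the crucial point, since if $y\preceq x$ and $x\preceq y$ then adding the two defining inequalities gives $2p(x,y)\le p(x,x)+p(y,y)$, i.e. $d_p(x,y)\le 0$, whence $p(x,x)=p(x,y)=p(y,y)$ and $x=y$ by $(P1)$. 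The payoff of antisymmetry is immediate and is the heart of the statement: a $\preceq$-\emph{minimal} point $z$ satisfies $S(z)=\{z\}$, and unwinding $S(z)=\{z\}$ is precisely the assertion $\phi(z)<\phi(x)+p(z,x)-p(z,z)$ for every $x\ne z$. Thus it suffices to exhibit a $\preceq$-minimal point, and the strictness then comes for free.

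To produce one I would invoke Zorn's lemma in its ``every chain has a lower bound'' form. Given a chain $C$, set $\ell=\inf_{c\in C}\phi(c)$ and extract an order-decreasing sequence $c_1\succeq c_2\succeq\cdots$ with $\phi(c_n)\downarrow \ell$. Mirroring the estimates of Theorem \ref{A}, from $c_m\in S(c_n)$ for $m\ge n$ one obtains $0\le p(c_n,c_m)-p(c_n,c_n)\le \phi(c_n)-\phi(c_m)\to 0$; since the quantity $p(c_n,c_n)+\phi(c_n)$ is nonincreasing (by $(P3)$ and the defining inequality on consecutive comparable elements), the self-distances $p(c_n,c_n)$ converge, and therefore $d_p(c_n,c_m)\to 0$, so $\{c_n\}$ is symmetrically Cauchy and, by completeness, symmetrically convergent to some $z^\ast$. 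Letting $m\to\infty$ in $p(c_n,c_m)\le p(c_n,c_n)+\phi(c_n)-\phi(c_m)$, using Lemma \ref{compllemma}(c) together with lower semicontinuity of $\phi$, gives $z^\ast\in S(c_n)$, i.e. $z^\ast\preceq c_n$ for every $n$; cofinality of $\{c_n\}$ in $C$ then upgrades this to $z^\ast\preceq c$ for all $c\in C$, so $z^\ast$ is the sought lower bound. Zorn then furnishes a minimal $z$, and by the previous paragraph $z$ satisfies the claimed strict inequality.

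I expect the delicate step to be the passage from a $\phi$-minimizing sequence to a genuine lower bound of the chain, precisely at the ``degenerate'' elements $c\in C$ with $\phi(c)=\ell$. The difficulty is intrinsic to partial metrics: the sets $S(x_n)$ need not shrink to a point, since their $d_p$-diameter can stay bounded away from $0$, so the naive single-sequence construction of Theorem \ref{A} may converge to a \emph{non-minimal} $z$ for which $S(z)$ still contains some $x\ne z$ with $\phi(x)=\phi(z)$ and $p(z,x)=p(z,z)$ while $p(x,x)<p(z,z)$ — and for such a $z$ the strict inequality genuinely fails. This is exactly the gap in the earlier argument: stopping at the symmetric limit of one descending sequence is not enough; one must keep descending through $\preceq$ until minimality is attained, which is what the Zorn (or transfinite) formulation enforces. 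Arranging that the extracted sequence can be taken cofinal in the chain, so that $z^\ast$ truly lies below the $\phi(c)=\ell$ elements, is where I would concentrate the careful work.
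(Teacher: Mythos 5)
Your route is genuinely different from the paper's: the paper simply reuses the point $z$ constructed in the proof of Theorem \ref{A} and derives a contradiction from the assumption $w\in S(z)$, $w\neq z$, via $\alpha(x_n)\le\phi(w)$ and $\lim_n\alpha(x_n)=\phi(z)$, whereas you recast the statement as the existence of a minimal element for the Br\o ndsted order $y\preceq x\iff y\in S(x)$ and invoke Zorn's lemma. Your verification of the order axioms is correct (antisymmetry via $2p(x,y)\le p(x,x)+p(y,y)$, i.e. $d_p(x,y)\le 0$, then $(P1)$), as is the translation of $\preceq$-minimality into the strict inequality, and the construction of the symmetric limit $z^{\ast}$ with $z^{\ast}\preceq c_n$ for all $n$ is sound.

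The gap is the sentence claiming that cofinality of $\{c_n\}$ in $C$ upgrades $z^{\ast}\preceq c_n$ to $z^{\ast}\preceq c$ for all $c\in C$: a sequence chosen only so that $\phi(c_n)\downarrow\ell$ need not be coinitial in the chain, and you cannot in general re-choose it to be so, because the exceptional set $D=\{c\in C:\ c\preceq c_n\ \mathrm{for\ all}\ n\}$ may be a nonempty subchain with no minimum and no countable coinitial subset. Every $c\in D$ has $\phi(c)=\ell$, and for such $c$ one computes $p(z^{\ast},c)=p(z^{\ast},z^{\ast})$, so $z^{\ast}\preceq c$ holds if and only if $p(z^{\ast},z^{\ast})-p(c,c)\le \ell-\phi(z^{\ast})$; both sides are nonnegative and neither controls the other, so the step can fail. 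The repair is a second descent inside $D$: there $\phi$ is constant and $c\preceq c'$ forces $p(c,c)\le p(c,c')=p(c',c')$, so take $d_n\in D$ decreasing with $p(d_n,d_n)\downarrow\beta=\inf_{c\in D}p(c,c)$; then $d_p(d_n,d_m)=|p(d_n,d_n)-p(d_m,d_m)|\to 0$, the symmetric limit $w$ satisfies $p(w,w)=\beta$, $\phi(w)\le\ell$ and $p(d_n,w)=p(d_n,d_n)$, and one checks directly that $w\preceq d_n$ for all $n$, that $w\preceq c$ for every $c\in D$ (here $p(w,c)=\beta=p(c,c)$ and $\phi(w)\le\ell=\phi(c)$), and that $w\preceq c$ for $c\in C\setminus D$ through some $c_n$. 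With that lemma inserted, Zorn applies and your argument closes. Your diagnosis in the final paragraph is on target, incidentally: the configuration $w\neq z$, $w\in S(z)$, $\phi(w)=\phi(z)$, $p(z,w)=p(z,z)$, $p(w,w)<p(z,z)$ is exactly the case the paper's own proof does not exclude, since its step ``$0<p(z,w)-p(z,z)$'' follows from ``$0<p(z,w)$'' only when $p(z,z)=0$.
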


\begin{proof}
We consider the element $z\in X$ to be the same one that have been obtained in the proof of Theorem \ref{A}. It is enough to show that $x\not\in S(z)$ for all $x\neq z$. Assume the contrary; that is, there exists $w\neq z$ such that $w\in S(z)$. This implies that
$0<p(z,w)\leq \phi (z)-\phi (w)+p(z,z)$. Thus, $0<p(z,w)-p(z,z)\leq\phi (z)-\phi (w)$ and hence
$$
\phi (w)<\phi (z).
$$

Using triangle inequality, we have
\begin{eqnarray*}
p(x_n,w) &\leq&  p(x_n,z) + p(z,w)- p(z,z) \\
&\leq & \phi (x_n )-\phi (z)+ p(x_n,x_n )+\phi (z)-\phi (w)+p(z,z)-p(z,z) \\
&=& \phi (x_n )-\phi (w)+ p(x_n,x_n )
\end{eqnarray*}
This implies that $w\in S(x_n )$ and hence $\alpha (x_n )\leq \phi(w)$ for all $n\in \mathbb{N}$. Therefore,
$$\lim_{n\rightarrow\infty}\alpha (x_n )=\phi (z)\leq \phi (w),$$
which leads us to a contradiction. \\
Thus, for any $x\in X$, $x\neq z$, we have $x\not\in S(z)$.

\end{proof}
Now we generalize Theorem \ref{dd} to $M$-metric spaces
\begin{theorem} \label{ddm}
Let $\phi : X\rightarrow [0,\infty )$ be a  lower semicontinuous on a complete $M-$ metric space $(X,p)$. Then there exists $z\in X$ such that
\begin{center}
$\phi (z)< \phi (x)+m(z,x)-m_{x,z}$ for all $x\in X$ with $x\neq z$.
\end{center}
\end{theorem}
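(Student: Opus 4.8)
The plan is to run the same variational-principle argument used for Theorem \ref{dd}, but with the partial-metric quantity $p(x,y)-p(x,x)$ replaced throughout by the $M$-metric quantity $m(x,y)-m_{x,y}$. First I would reuse the machinery already built in the proof of Theorem \ref{AA}: starting from the set $S(x)=\{y\in X: m(x,y)\le m_{x,y}+\phi(x)-\phi(y)\}$ and the function $\alpha(x)=\inf\{\phi(y):y\in S(x)\}$ from (\ref{BB1}), I would form the minimizing sequence $x_{n+1}\in S(x_n)$ with $\phi(x_{n+1})\le\alpha(x_n)+\tfrac1n$, which is Cauchy and converges in the complete space $(X,m)$ to a point $z$ satisfying, by (\ref{BB4}) and the lower-semicontinuity estimate, $\phi(z)=L=\inf_n\phi(x_n)=\lim_n\alpha(x_n)$ and, by (\ref{BB14}), $z\in S(x_n)$ for every $n$. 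This $z$ is my candidate. The decisive reformulation is that the desired conclusion $\phi(z)<\phi(x)+m(z,x)-m_{z,x}$ for every $x\neq z$ is \emph{exactly} the assertion $S(z)=\{z\}$, since $x\in S(z)$ means by definition $m(z,x)-m_{z,x}\le\phi(z)-\phi(x)$.

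The main line of attack is then to argue by contradiction: suppose some $w\neq z$ lies in $S(z)$. The heart of the argument is a propagation step showing $S(z)\subseteq S(x_n)$ for every $n$, which I would obtain by combining the $M$-metric triangle inequality (condition (4) of the definition) with the membership $z\in S(x_n)$: for any $y\in S(z)$,
\begin{equation*}
m(x_n,y)-m_{x_n,y}\le (m(x_n,z)-m_{x_n,z})+(m(z,y)-m_{z,y})\le (\phi(x_n)-\phi(z))+(\phi(z)-\phi(y))=\phi(x_n)-\phi(y),
\end{equation*}
so $y\in S(x_n)$. Taking $y=w$ gives $\alpha(x_n)\le\phi(w)$ for all $n$, whence $\phi(z)=L=\lim_n\alpha(x_n)\le\phi(w)$. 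On the other hand, $w\in S(z)$ together with condition (2) ($m(z,w)\ge m_{z,w}$) forces $\phi(w)\le\phi(z)$. Thus I am led to $\phi(w)=\phi(z)$ and, back-substituting, $m(z,w)=m_{z,w}=\min\{m(z,z),m(w,w)\}$.

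The step I expect to be the genuine obstacle is upgrading this to a contradiction, i.e. deducing $w=z$ from $\phi(w)=\phi(z)$ and $m(z,w)=m_{z,w}$. In the partial-metric Theorem \ref{dd} the strictness was free: the relevant limit point there satisfies $p(z,z)=0$, so the self-distance correction vanishes and Lemma \ref{tlemma}(B) gives $p(z,w)>0$, immediately yielding $\phi(w)<\phi(z)$. In the $M$-metric setting, however, condition (1) only excludes the coincidence $m(z,z)=m(w,w)=m(z,w)$, so one may well have $m(z,w)=\min\{m(z,z),m(w,w)\}$ with $z\neq w$; moreover there is no $M$-metric analogue of Lemma \ref{tlemma}, since $m(z,w)=0$ need not force $z=w$ once self-distances are allowed to be positive. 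Consequently the crux of any honest proof is to secure, for the \emph{constructed} limit point $z$, a property strong enough to rule out this degenerate equality—most naturally $m(z,z)=0$, in the spirit of the remark following Theorem \ref{A} and of Remark \ref{AAstrong}—and to then argue directly that $m(z,w)=m_{z,w}$ with $\phi(w)=\phi(z)$ collapses to $z=w$. Since the hypotheses as stated include neither $X_0=\{x:m(x,x)=0\}\neq\emptyset$ nor any device guaranteeing such a limit point, I anticipate that either this hypothesis must be added (and the sequence steered inside $X_0$) or the conclusion weakened to the non-strict inequality; isolating and repairing this strictness gap is where the real work lies.
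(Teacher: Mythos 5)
Your proposal retraces the paper's own argument step for step: the same limit point $z$ carried over from the proof of Theorem \ref{AA}, the same reformulation of the conclusion as ``$x\notin S(z)$ for all $x\neq z$,'' and the same propagation step $S(z)\subseteq S(x_n)$ via the $M$-metric triangle inequality, yielding $\phi(z)=\lim_n\alpha(x_n)\leq\phi(w)$ for any putative $w\in S(z)$. The one place you part company with the paper is the decisive one: the paper simply writes ``$0<m(z,w)-m_{w,z}\leq\phi(z)-\phi(w)$'' for $w\neq z$, taking the strict positivity of $m(z,w)-m_{w,z}$ for granted, whereas you correctly observe that the $M$-metric axioms do not supply it. Axiom (2) gives only $m(z,w)\geq m_{z,w}$, and axiom (1) forbids equality only in the full coincidence $m(z,z)=m(w,w)=m(z,w)$; there is no analogue of Lemma \ref{tlemma} here. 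So the paper's contradiction $\phi(w)<\phi(z)$ never materializes, and all one can extract is $\phi(w)=\phi(z)$ together with $m(z,w)=m_{z,w}$, exactly as you derive. (The paper's displayed triangle-inequality computation also carries a stray $-\phi(z)$; your version of that estimate is the correct one.)

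Your suspicion that this is not a repairable gap in the proof but a defect of the statement is also right: the theorem as written is false. Take $X=\{a,b\}$ with $m(a,a)=m(a,b)=0$ and $m(b,b)=1$. All four axioms hold, every quantity $m(x,y)-m_{x,y}$ vanishes, and $(X,m)$ is complete (no sequence visiting both points infinitely often is Cauchy, since $M_{x_n,x_m}-m_{x_n,x_m}$ oscillates between $0$ and $1$). With $\phi\equiv 0$, the required strict inequality $\phi(z)<\phi(x)+m(z,x)-m_{x,z}$ reads $0<0$ at the point $x\neq z$, so no choice of $z$ works. The repairs you propose are the right ones: either weaken the conclusion to $\phi(z)\leq\phi(x)+m(z,x)-m_{x,z}$, which your argument does prove, or impose an extra hypothesis strong enough to exclude $m(z,w)=m_{z,w}$ for $w\neq z$ --- and, as you note, even securing $m(z,z)=0$ for the constructed limit point would not by itself suffice, since $m(z,w)=0$ with $z\neq w$ remains admissible when $m(w,w)>0$. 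In short, your write-up is incomplete only where the paper's own proof is unsound, and your diagnosis of that point is sharper than the paper's treatment of it.
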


\begin{proof}
Recalling that $S(z)=\{x \in X: m(x,z)-m_{x,z}\leq \phi(z)-\phi(x)\}$,  It is enough to show that $x \not \in S(z)$ for all $x\neq z$, where $z\in X$ is the same one that have been obtained in the proof of Theorem \ref{AA} with the property that $z \in S(x_n)$ for all $n \in \mathbb{N}$.  Assume the contrary; that is, there exists $w\neq z$ such that $w\in S(z)$. This implies that
$0<m(z,w)-m_{w,z}\leq \phi (z)-\phi (w)$. Thus, $\phi (w)<\phi (z).$

Using triangle inequality, we have for each $n \in \mathbb{N}$
\begin{eqnarray*}
m(x_n,w)-m_{x_n,w} &\leq&  m(x_n,z)-m_{x_n,z} + m(z,w)- m_{z,w} \\
&\leq & \phi (x_n )-\phi (z)+\phi (z)-\phi (w)-\phi(z) \\
&=& \phi (x_n )-\phi (w)
\end{eqnarray*}
This implies that $w\in S(x_n )$ for all $n \in \mathbb{N}$ and hence $\alpha (x_n )\leq \phi(w)$ for all $n\in \mathbb{N}$. Therefore,
$$\lim_{n\rightarrow\infty}\alpha (x_n )=\phi (z)\leq \phi (w),$$
which leads us to a contradiction. \\

\end{proof}

\begin{remark}
Notice that since each partial metric space is $M-$metric space, then the conclusion in Theorem \ref{dd} agrees with the conclusion of Theorem \ref{ddm}. Indeed, if $p$ is a partial metric then its is an $M-$ metric ($p=m$) and from Theorem \ref{dd} we conclude that there exists $z \in X$ such that $\phi(z)< \phi(x)+m(x,z)-m(z,z)\leq \phi(x)+m(x,z)-m_{x,z}$.
\end{remark}
\section{Examples}

In this section, we will give two examples of an $M-$metric space which are not  partial metric spaces that will verify Theorem \ref{AA} and its remark (Remark \ref{AAstrong}).

\begin{example}
Let $ X=\{ 1,2,3,4\}$; define the function on $X\times  X$ as follows
$m(1,1)=1,$ $m(2,2)=3, $ $m(3,3)=5$, $m(4,4)=3,$
$m(1,2)=m(2,1)=10$, $m(1,3)=m(3,1)=m(3,2)=m(2,3)=7$,
$m(1,4)=m(4,1)=8,$  $m(2,4)=m(4,2)= 6$, $m(3,4)=m(4,3)=6$.
Then it is easy to verify that  $m$ is an $M-$metric space but it is not a partial metric space because it does not satisfy the triangle inequality
$m(1,2)\not\leq m(1,3)+m(3,2)-m(3,3)$. \\
In this example we notice that $m(x,x)\neq 0$, for all $x\in X$.
Let $\phi : X\rightarrow \mathbb{R}^+$ defined by $\phi (x)=10x$
and define the self-map $T: X\rightarrow X$ by $T(x)=1$ for all $x\neq 4$ and $T(4)=4$.
Take $\epsilon = 0.1$ and define the corresponding open balls
$B(x,0.1 ) =\{ y\in X : m(x,y)< m_{x,y}+0.1$. It is straightforward to verify that the open balls are single sets;
$B(x,0.1 ) =\{ x\}$ for all $x\in X$. Hence, the $M-$ metric topology on $X$  is the discrete topology and thus each map defined on $X$ is lower semicontinuous.
Also, for all $x\in X$, we notice that $m(x,Tx)\leq m_{x,Tx}+ \phi(x)-\phi(Tx)$.
Therefore, the function $T$ satisfies the conditions of the main result (Theorem \ref{AA}) and so it has a fixed point.
Actually, $x=1, 4$ are fixed points.
\end{example}
The next example is to verify Remark \ref{AAstrong}.
\begin{example}
Let $ X=\{ 1,2,3,4\}$; define the function on $X\times  X$ as follows:
$m(1,1)=0,$ $m(2,2)=3, $ $m(3,3)=5$, $m(4,4)=0,$
$m(1,2)=m(2,1)=10$, $m(1,3)=m(3,1)=m(3,2)=m(2,3)=7$,
$m(1,4)=m(4,1)=8,$  $m(2,4)=m(4,2)= 5$, $m(3,4)=m(4,3)=6$. \\
Note that $X_0=\{x \in X:m(x,x)=0\}\neq \emptyset$.
Then it is easy to verify that  $m$ is an $M$-metric space but it is not a partial metric space because it does not satisfy the triangle inequality
$m(1,2)\not\leq m(1,3)+m(3,2)-m(3,3)$.  \\
Let  $\phi : X\rightarrow \mathbb{R}^+$ defined by $\phi (x)=10x$
and define the self-mapping $T: X\rightarrow X$ by $T(x)=1$ for all $x\neq 4$ and $T(4)=4$.
Moreover, for $\epsilon = 0.1$, it is straightforward to verify that for all $x\in X$, the corresponding open balls are single sets; that is,
$$
B(x,0.1 ) =\{ y\in X : m(x,y)< m_{x,y}+0.1 \} =\{ x\}.
$$

Hence, $X$  has a discrete topology structure.
Also, for all $x\in X$, we notice that $m(x,Tx)\leq \phi(x)-\phi(Tx)$.
Therefore, the function $T$ satisfies the conditions of the main result Theorem \ref{B} and so it has a fixed point.
Actually, $x=1, 4$ are the fixed points.
\end{example}

\end{document}